\DeclareMathOperator{\ch}{ch}
\DeclareMathOperator{\sh}{sh}
\DeclareMathOperator{\CF}{CF}
\DeclareMathOperator{\id}{id}
\DeclareMathOperator{\Cl}{Cl}
\DeclareMathOperator{\St}{St}
\theoremstyle{definition}
\newtheorem{definition}{Définition}
\newtheorem{notation}{Notation}
\theoremstyle{plain} 
\newtheorem{exemple}{Exemple}
\theoremstyle{plain} 
\newtheorem{remarque}{Remarque}
\newtheorem{theoreme}{Théorème}
\newtheorem{ThmIntro}{Théorème}
\newtheorem{ThmPrinc}{Théorème}
\newtheorem{lemme}[theoreme]{Lemme}
\newtheorem{cor}[theoreme]{Corollaire}
\newtheorem{prop}[theoreme]{Proposition}
\title{Polynomialité d'anneaux de représentations modulaires projectives}
\author{Hélène Pérennou}
\date{\today}
\thanks{L'auteur remercie le Centre Henri Lebesgue ANR-11-LABX-0020-01 ainsi que le projet ANR-16-CE40-0003 ChroK pour leur soutien.}
\address{Universit\'e de Nantes \\
Laboratoire de Math\'ematiques Jean Leray
(UMR 6629 CNRS \& UN)}
\email{helene.perennou@univ-nantes.fr}
\begin{document}
\selectlanguage{english}
\begin{abstract}
Consider the Grothendieck group of finite type projective modular representations of the symmetric groups on $n$ letters, or more generally, of its wreath product with a finite group. 
They form a graded group, with a product defined using induction. We show that the resulting graded ring is a polynomial ring.
\end{abstract}
\selectlanguage{frenchb}
\maketitle
\section*{Introduction}

On s'intéresse aux représentations modulaires des groupes symétriques $S_n$.
Pour $p$ premier, et pour un entier naturel $n$, notons $K_0^{\mathbb{F}_p}(S_n)$ le groupe de Grothendieck des $\mathbb{F}_pS_n$-modules projectifs de type fini.
Pour tous $(k,l)$ dans $\mathbb{N}^2$ tels que $k+l=n$, l'identification du groupe $S_k\times S_l$ au sous-groupe de $S_n$ qui stabilise le sous-ensemble $\{1,\ldots,k\}$ de $\{1,\ldots,n\}$ permet de définir le foncteur d'induction
$$ \_ \uparrow_{S_k\times S_l}^{S_n} : K_0^{\mathbb{F}_p}(S_k)\otimes K_0^{\mathbb{F}_p}(S_l) \rightarrow K_0^{\mathbb{F}_p}(S_n)$$
On note $K_0^{\mathbb{F}_p}(S_\infty)$ l'anneau gradué  dont le sous-anneau des éléments homogènes de degré $n$ est  $K_0^{\mathbb{F}_p}(S_n)$ et où la multiplication est donnée par ces foncteurs d'inductions.
On peut préciser la structure de cet anneau  :
\begin{ThmIntro}\label{thm1}
 L'anneau $K_0^{\mathbb{F}_p}(S_\infty)$  est un anneau de polynômes avec un générateur en chaque degré premier à $p$. 
\end{ThmIntro}
Ce résultat est sans doute connu des spécialistes, cependant nous n'avons pu trouver une référence. Il a été démontré par Robert Oliver et Pierre Vogel \cite{Vogel} en 1988, mais il semble que le tapuscrit ait été perdu.

On peut en fait être un peu plus général en considérant les produits en couronne d'un groupe fini $G$ quelconque avec les groupes symétriques. Soit $k$ un corps de caractéristique $p$. On note $K_0^{k}(G\wr S_n)$ le groupe de Grothendieck de la catégorie des $kG\wr S_n$-modules projectifs de type fini. Pour tous $(k,l)$ dans $\mathbb{N}^2$ tels que $k+l=n$, une identification naturelle du groupe $(G\wr S_k)\times (G\wr S_l)$ à un sous-groupe  de $G \wr S_n$ permet de définir le foncteur d'induction
$$ \_ \uparrow_{(G\wr S_k)\times (G\wr S_l)}^{G\wr S_n} : K_0^{k}(G\wr S_k)\otimes K_0^{k}(G\wr S_l) \rightarrow K_0^{k}(G\wr S_n)
.$$
On note alors $K_0^{k}(G\wr S_\infty)$ l'anneau gradué  dont le sous-anneau des éléments homogènes de degré $n$ est  $K_0^k(G\wr S_n)$ et où la multiplication est donnée par ces foncteurs d'induction. 
A nouveau, on peut en préciser la structure :
\begin{ThmIntro}\label{thm2}
Soit $k$ un corps de caractéristique $p$. Si $k$ est un corps de rupture pour le groupe fini $G$, l'anneau \hbox{$K_0^k(G\wr S_\infty)$} est un anneau de polynômes.
\end{ThmIntro}

Ce papier s'organise de la manière suivante. 
On commence par rappeler un résultat bien connu en caractéristique nulle (attribué à \cite{Geis} par \cite{Cartier}) : si on note $R^{\mathbb{Q}}(S_n)$ le groupe de Grothendieck des représentations de $S_n$ sur $\mathbb{Q}$ et $R^{\mathbb{Q}}(S_\infty)$ l'anneau gradué qui s'en déduit, il affirme que $R^{\mathbb{Q}}(S_\infty)$ est un anneau de polynômes sur $\mathbb{Z}$, avec pour générateurs les représentations triviales. 
Ensuite, la section \ref{wprod0:subsection} expose la généralisation du résultat précédent aux représentations des produits en couronne,
toujours en caractéristique nulle \cite[7]{Zel}:
si $K$ est un corps de caractéristique nulle, assez grand, $R^K(G\wr S_\infty)$ est isomorphe à un produit tensoriel fini de copie de $R^{K}(S_\infty)$. C'est donc également un anneau de polynôme sur $\mathbb{Z}$.
On se réfère essentiellement à \cite{Zel} pour ces résultats en caractéristique nulle, on se place donc dans le cadre des $PSH$-algèbres. 

La seconde partie démontre le théorème \ref{thm1}. Celle-ci n'utilise d'outil supplémentaire que la théorie de Brauer qui établit un lien entre les représentations modulaires et les représentations en caractéristique nulle.
Ce sont des arguments similaires qui démontrent le théorème plus général \ref{thm2} dans la troisième partie.

\section{Anneau de représentations en caractéristique nulle}
Dans cette section on rappelle les résultats de \cite{Zel} que nous utiliserons par la suite. Pour les détails, on pourra aussi consulter \cite[3]{GinbergReiner}. 
\subsection{Cas des groupes symétriques}\label{cpxsym}

\begin{notation}\label{notsym}
\item
\begin{itemize}[label=--,leftmargin=*]
\item $K$ désigne un corps de caractéristique $0$.
\item Pour $n\in\mathbb{N}$, $\mathcal{P}_n$ désigne l'ensemble des partitions de $n$ et \hbox{$\mathcal{P} = \bigsqcup_{n\geq 0} \mathcal{P}_n$}  l'ensemble des partitions. 
On munit $\mathcal{P}$ de l'ordre lexicographique. 
Le plus grand élément de $\mathcal{P}_n$ est la partition avec une unique part de taille $n$.
\item Pour $\mathbf{t}=(t_1,t_2,\ldots)$ une suite d'indéterminées, on note $\Lambda(\mathbf{t})=\Lambda$ l'anneau gradué des fonctions symétriques en $\mathbf{t}$ à coefficients dans $\mathbb{Z}$. On note, pour un entier positif $n$:
$$ h_n = \sum_{i_1 \leq \ldots \leq i_n} t_{i_1}\ldots t_{i_n}$$
avec la convention $h_0=1$.
\end{itemize}

Rappelons un résultat classique sur les séries symétriques.
\begin{theoreme}
L'anneau des séries formelles est un anneau de polynômes. 
Précisément :
$$\Lambda = \mathbb{Z}[h_i | i\geq 1 ]$$
\end{theoreme}
En fait, c'est une $PSH$-algèbre pour la multiplication et la co-multiplication usuelles \cite[5]{Zel}.
\begin{itemize}[label=--,leftmargin=*]
\item On note $R^K(S_n)$ le groupe de Grothendieck de la catégorie des représentations du groupe symétrique $S_n$ sur $K$. On note également $R^K(S_\infty)$ le groupe abélien gradué dont les éléments homogènes de degré $n$ forment $R^K(S_n)$. C'est une algèbre de Hopf, la multiplication étant induite par les foncteurs d'induction $(\_)\uparrow_{S_n\times S_m}^{S_{n+m}}$ et la co-multiplication  par les foncteurs de restriction \hbox{$(\_)\downarrow_{S_n\times S_m}^{S_{n+m}}$} (voir \cite{Zel} pour les détails, notamment du fait que la co-multiplication est un morphisme d'anneaux). On définit un produit scalaire sur $R^K(S_\infty)$ en posant 
$$\langle \chi_\lambda, \chi_{\lambda'} \rangle =\delta_{\lambda,\lambda'}$$
sur la base $\{\chi_\lambda\}_{\lambda\in\mathcal{P}}$ des simples $\{\chi_\lambda\}_{\lambda\in\mathcal{P}}$.

\item On note $\CF^K(S_\infty)$ l'anneau gradué des fonctions de classes des $S_n$. Prenant les caractères, on plonge $R^K(S_\infty)$ dans $\CF^K(S_\infty)$, et la structure s'étend pour faire de $\CF^K(S_\infty)$ une algèbre de Hopf, qu'on identifie à $K\otimes R^K(S_\infty)$. 
\item On note $c_n=n\mathds{1}_n$, où $\mathds{1}_n$ est la classe caractéristique du cycle d'ordre $n$ dans $S_n$. Si $\lambda = (\lambda_1,\ldots,\lambda_k) \in \mathcal{P}_n$, on note $c_{\lambda}=c_{\lambda_1}\ldots c_{\lambda_k}$. 

On a \cite[6.3]{Zel}:
$$ \CF^K(S_\infty) = K[ c_n |  n\geq 1]$$
\end{itemize}
\end{notation}
Le résultat de structure des représentations des groupes symétriques en caractéristique nulle est le suivant :
\begin{theoreme}\cite[6]{Zel}
On dispose d'un isomorphisme de $PSH$-algèbres
$$ R^K(S_\infty) \overset{\sim}{\longrightarrow} \Lambda $$
qui envoie la base orthonormée des caractères irréductibles sur la base orthonormée des fonctions de Schur.
De plus cet isomorphisme envoie $x_n$ sur $h_n$ pour tout entier positif $n$.
En particulier $$R^K(S_\infty)=\mathbb{Z}[x_n | n\geq 1]$$
\end{theoreme}

\subsection{Cas des produits en couronne}
\label{wprod0:subsection}
On donne ici l'analogue du résultat précédent pour les représentations des produits en couronne avec les groupes symétriques.

\begin{notation}
	\item
\begin{itemize}[label=--,leftmargin=*]
		\item $G$ désigne un groupe fini et pour tout $n$ dans $\mathbb{N}$, on note $G\wr S_n$ le produit en couronne de $G$ avec $S_n$.
		\item A présent, $K$ est un corps de rupture pour $G$, toujours de caractéristique $0$.
		\item On note $\Cl(G)$ l'ensemble des classes de conjugaison de $G$ et $N=\# \Cl(G)$.
		\item De même, on note $\Cl(G\wr S_n)$ l'ensemble des classes de conjugaison de $G\wr S_n$. Ces classes sont paramétrées par l'ensemble  $\{\psi : \Cl(G) \rightarrow \mathcal{P} | \sum_{C\in\Cl(G)}|\psi(C)| = n\}$.
		\item  On note $\Omega(G)$ l'ensemble  des classes d'équivalence des représentations irréductibles de $G$ sur $K$ et $\{\rho_1,\ldots,\rho_N\}$ ces représentations irréductibles.
		\item $R^K(G \wr S_\infty)$ est l'anneau gradué dont la multiplication est donnée par les foncteurs d'induction et la co-multiplication donnée par les foncteurs de restriction. C'est une algèbre de Hopf \cite[7.1]{Zel}.
		\item $\CF^K(G\wr S_\infty)$ l'anneau gradué des fonctions de classe des $G\wr S_n$. A nouveau la multiplication se déduit de celle sur $R^K(G\wr S_\infty)$. 
		\item Soit $C=[g]$ la classe de conjugaison associée à $g$ dans $G$, on note $$\xi_{n,c} = \frac{|G\wr S_n|}{|C|}\mathds{1}_{n,C}$$ où $\mathds{1}_{n,C}$ est la fonction caractéristique de la classe de $g.(1 \ldots n)$ dans $G\wr S_n$. L'anneau $\CF^K(G\wr S_\infty)$ est une algèbre de Hopf qui s'identifie à $K\otimes R^K(G \wr S_\infty)$ et on a \cite[7.7]{Zel}:
		$$\CF^K(G\wr S_\infty)=K[\xi_{n,C} | C\in\Cl(G), n\geq 1]$$
\end{itemize}
\end{notation}

\begin{definition}\label{defwrp}
Soit $\chi_\lambda$ une représentation irréductible de $S_n$ et $\rho\in\Omega(G)$. On définit $\chi_{\lambda,\rho}$ une représentation de $G\wr S_n$ qui agit sur l'espace de la représentation $\chi_{\lambda}\otimes(\rho^{\otimes n})$ par :
$$\sigma(u\otimes(v_1\otimes \ldots \otimes v_n))=\sigma(u)(v_{\sigma^{-1}(1)}\otimes\ldots\otimes v_{\sigma^{-1}(n)}$$
$$\mathbf{g}(u\otimes(v_1\otimes \ldots \otimes v_n))= u\otimes(g_1v_1\otimes \ldots \otimes g_nv_n))$$
où $\sigma\in S_n$ et $\mathbf{g}=(g_1,\ldots,g_n)\in G^n$.

Ces constructions induisent pour chaque $\rho\in\Omega(G)$ un morphisme $\Phi_\rho$ définit sur la base des classes d'irréductibles par : 
$$ 
\begin{array}{rcl}
\Phi_\rho : R^K(S_\infty) & \rightarrow & R^K(G\wr S_\infty) \\
\chi_\lambda   & \mapsto & \chi_{\lambda,\rho}
\end{array}$$
On notera plus simplement $\Phi_k$ pour $\Phi_{\rho_k}$.
\end{definition}

\begin{definition}
	Soit $\varphi : \Omega(G)\rightarrow \mathcal{P}$ est tel que $\sum_{\rho\in\Omega(G)} |\varphi(\rho)|=n$. On a 
	$$ (\chi_{\varphi(\rho_1),\rho_1} \otimes\ldots\otimes \chi_{\varphi(\rho_N),\rho_N})$$
	une représentation de $(G\wr S_{|\varphi(\rho_1)|})\times \ldots\times (G\wr S_{|\varphi(\rho_N)|})$. Ce groupe s'identifie à un sous-groupe de $G\wr S_n$ et on note $\chi_\varphi$ la représentation induite sur $G\wr S_n$.
\end{definition}

\begin{theoreme}
Les représentations irréductibles de $G\wr S_n$ sur $K$ sont les représentations $\chi_{\varphi}$ où $\varphi$ parcours l'ensemble des applications  $\Omega(G)\rightarrow \mathcal{P}$ est tel que $\sum_{\rho\in\Omega(G)} |\varphi(\rho)|=n$. De plus, on a un isomorphisme de $PSH$-algèbres :
$$
\begin{array}{rcl}
 R^K(G\wr S_\infty) & \rightarrow  & \bigotimes_{i=1}^N R^K(S_\infty) \\
        \chi_{\varphi} & \mapsto & \chi_{\varphi(\rho_1),\rho_1} \otimes\ldots\otimes \chi_{\varphi(\rho_N),\rho_N}
\end{array}$$
et l'isomorphisme réciproque est $\Phi=\Phi_{1}\otimes\ldots \otimes \Phi_{N}$.
En particulier,
$$R^K(G\wr S_\infty) = \mathbb{Z}[\Phi_k(x_n) | k \in [\![1;N]\!], n\geq 1] $$
\end{theoreme}

On dispose donc pour chaque $n$ de deux bases de l'espace des éléments primitifs de $\CF^K(S_n)$, à savoir $\{\Phi_\rho(c_n) | \rho\in\Omega(G) \}$ et $\{ \xi_{n,C} | C\in \Cl(G)\}$. La proposition suivante donne les formules changement de bases.

\begin{prop}\cite[p. 103]{Zel}\label{chgtbase}
	Pour tout $\rho\in\Omega(G)$ et $C\in\Cl(G)$, on note $\rho(C)$ la valeur du caractère de $\rho$ sur la classe de conjugaison $C$. Alors, pour tout $n\geq 1$, on a :
	$$\xi_{n,C} = \sum_{\rho\in\Omega(G)} \overline{\rho(C)}.\Phi_{\rho}(c_n) $$
	et
	$$\Phi_{\rho}(c_n) = \sum_{C\in \Cl(G)} \rho(C).\frac{|C|}{|G|}.\xi_{n,C}$$
\end{prop}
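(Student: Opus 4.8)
Pour établir la proposition~\ref{chgtbase}, le plan est de calculer explicitement la fonction de classe $\Phi_\rho(c_n)$, d'en déduire la seconde formule, puis d'inverser à l'aide des relations d'orthogonalité de la table de caractères de $G$. On commence par décrire $\Phi_\rho$ comme opération sur les fonctions de classe. Par construction (définition~\ref{defwrp}), $\chi_{\lambda,\rho}$ est, comme $G\wr S_n$-module, le produit tensoriel de l'inflation de $\chi_\lambda$ le long de $G\wr S_n\twoheadrightarrow S_n$ et du $G\wr S_n$-module porté par $\rho^{\otimes n}$ décrit dans cette définition, dont on note $\eta_\rho$ le caractère. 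L'inflation et la multiplication par $\eta_\rho$ étant $K$-linéaires, l'extension $K$-linéaire de $\Phi_\rho$ à $\CF^K(S_\infty)$ vérifie $\Phi_\rho(f)=\eta_\rho\cdot\operatorname{Inf}(f)$ pour toute $f\in\CF^K(S_n)$. Comme $c_n=n\mathds{1}_n$, il vient $\Phi_\rho(c_n)=n\,\eta_\rho\cdot\operatorname{Inf}(\mathds{1}_n)$.

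On calcule ensuite explicitement $\eta_\rho$. Pour $(\mathbf{g},\sigma)\in G\wr S_n$, l'opérateur associé à $(\mathbf{g},\sigma)$ sur l'espace de $\rho^{\otimes n}$ se décompose, selon les cycles de $\sigma$, en somme directe d'opérateurs mêlant une permutation cyclique des facteurs et une action diagonale des $g_i$ ; la trace de l'opérateur attaché à un cycle $z$ vaut $\rho(g_z)$, où $g_z\in G$ est le produit cyclique des éléments de $G$ indexés par les points de $z$. Ainsi $\eta_\rho(\mathbf{g},\sigma)=\prod_z\rho(g_z)$. Or $\operatorname{Inf}(\mathds{1}_n)$ est l'indicatrice des $(\mathbf{g},\sigma)$ tels que $\sigma$ soit un $n$-cycle, donc $\Phi_\rho(c_n)$ s'annule hors de ces éléments et y vaut $n\,\rho(g_z)=n\,\rho(C)$, où $C\in\Cl(G)$ est la classe de conjugaison du produit de cycle.

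Il reste à regrouper puis à inverser. La fonction $\mathds{1}_{n,C}$ étant précisément l'indicatrice de l'ensemble des $(\mathbf{g},\sigma)$ tels que $\sigma$ soit un $n$-cycle dont le produit de cycle appartient à $C$, le regroupement selon $C$ fournit $\Phi_\rho(c_n)=n\sum_{C\in\Cl(G)}\rho(C)\,\mathds{1}_{n,C}$. Un décompte de cardinalité montre que la classe de conjugaison de $g\cdot(1\ldots n)$ dans $G\wr S_n$ a pour cardinal $(n-1)!\,|C|\,|G|^{n-1}$, d'où $\xi_{n,C}=\frac{n|G|}{|C|}\,\mathds{1}_{n,C}$ et par conséquent $n\,\mathds{1}_{n,C}=\frac{|C|}{|G|}\,\xi_{n,C}$, ce qui est la seconde formule. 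On en déduit la première en y substituant la seconde et en utilisant l'orthogonalité des colonnes de la table de caractères de $G$, à savoir $\sum_{\rho\in\Omega(G)}\overline{\rho(C)}\,\rho(C')=\frac{|G|}{|C|}\,\delta_{C,C'}$, valable sur tout corps de rupture de caractéristique nulle dès qu'on lit $\overline{\rho(C)}$ comme $\rho(C^{-1})$ ; après simplification on obtient $\xi_{n,C}=\sum_{\rho\in\Omega(G)}\overline{\rho(C)}\,\Phi_\rho(c_n)$.

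Le point le plus délicat sera le calcul du caractère $\eta_\rho$ --- une trace d'opérateurs de permutation tordue sur une puissance tensorielle --- et surtout le soin apporté aux normalisations (tailles des classes de conjugaison dans $G\wr S_n$, coefficient définissant $\xi_{n,C}$), puisque ce sont ces constantes qui produisent exactement les facteurs $\frac{|C|}{|G|}$ et $\overline{\rho(C)}$ de l'énoncé. On peut toutefois contourner l'essentiel de ce calcul. Comme $\Phi_\rho$ est la restriction à un facteur tensoriel de l'isomorphisme d'algèbres de Hopf $\Phi=\Phi_1\otimes\ldots\otimes\Phi_N$, c'est un morphisme d'algèbres de Hopf ; il envoie donc l'élément primitif $c_n$ de $\CF^K(S_\infty)$ sur un élément primitif de $\CF^K(G\wr S_\infty)$ de degré $n$, lequel est combinaison linéaire des $\xi_{n,C}$. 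Il ne reste alors qu'à déterminer les coefficients en évaluant $\Phi_\rho(c_n)$ sur les seuls éléments $g\cdot(1\ldots n)$, ce qui n'utilise que le cas d'un cycle unique dans la formule pour $\eta_\rho$.
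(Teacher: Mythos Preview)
The paper does not give its own proof of this proposition; it is stated with a citation to \cite[p.~103]{Zel} and nothing more. Your argument therefore supplies a proof the paper omits, and it is correct. Identifying $\Phi_\rho$ on class functions as inflation along $G\wr S_n\twoheadrightarrow S_n$ followed by multiplication by the character $\eta_\rho$ of the $G\wr S_n$-module $\rho^{\otimes n}$, together with the standard cycle-product trace formula $\eta_\rho(\mathbf g,\sigma)=\prod_z\rho(g_z)$, yields $\Phi_\rho(c_n)=n\sum_{C\in\Cl(G)}\rho(C)\,\mathds{1}_{n,C}$. Your count of the conjugacy class of $g\cdot(1\ldots n)$ in $G\wr S_n$ as $(n-1)!\,|C|\,|G|^{n-1}$ is right, so that indeed $\xi_{n,C}=\frac{n|G|}{|C|}\,\mathds{1}_{n,C}$, giving the second formula; column orthogonality of the character table of $G$ then inverts it to the first. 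The final observation is also valid and useful: since each $\Phi_\rho$ is a Hopf algebra morphism (being the composite of the inclusion of one tensor factor with the isomorphism $\Phi$), $\Phi_\rho(c_n)$ is primitive of degree $n$, hence a linear combination of the $\xi_{n,C}$; this reduces the whole computation of $\eta_\rho$ to the single case of an $n$-cycle.
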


\section{Anneau des représentations modulaires projectives des groupes symétriques}

On peut à présent s'intéresser aux représentations modulaires projectives des groupes symétriques. On prouve le théorème \ref{thm1} à la fin de cette section.

\begin{notation}
\item
\begin{itemize}[label=--,leftmargin=*]
\item Dans toute la suite, $p$ désigne un nombre premier fixé ;
\item On se donne $(K,A,k)$ un système $p$-modulaire, i.e. $A$ est un anneau de valuation d'idéal maximal $\mathfrak{m}$, $k=A/\mathfrak{m}$ et $K$ est le corps des fractions de $A$ ;
\item On note $\mathcal{P}_{n,reg}$ l'ensemble des partitions où toutes les parts sont de longueur première à $p$ et $\mathcal{P}_{reg}=\bigsqcup_{n \geq 0} \mathcal{P}_{n,reg}$.
\end{itemize}

\begin{remarque}
Les classes de conjugaison $p$-régulières de $S_n$ (i.e. d'élément d'ordre premier à $p$) sont paramétrées par $\mathcal{P}_{n,reg}$.
\end{remarque}

\begin{itemize}[label=--,leftmargin=*]
\item On note $K_0^k(S_n)$ (resp. $K_0^A(S_n)$) le groupe de Grothendieck de la catégorie des $kS_n$-modules (resp. $AS_n$-modules) projectifs de type fini et $K_0^k(S_\infty)$ (resp. $K_0^A(S_\infty)$) le groupe gradué dont les éléments homogènes de degré $n$ s'identifient aux éléments de $K_0^k(S_n)$ (resp. $K_0^A(S_n)$).
\end{itemize}

\begin{remarque}
Comme tout corps est un corps de rupture pour les $(S_n)_{n\geq 0}$ \cite[2.1.12]{JamesKerber}, on a toujours $R^K(S_\infty) \cong R^{\mathbb{Q}}(S_\infty)$ et $K_0^k(S_\infty) \cong K_0^{\mathbb{F}_p}(S_\infty)$.
\end{remarque}

\begin{itemize}[label=--,leftmargin=*]
\item On note $\CF^K_{reg}(S_\infty)$ l'ensemble des fonctions de classes à valeurs dans $K$ qui s'annulent sur les éléments d'ordre divisible par $p$. L'anneau $K\otimes K_0^k(S_\infty)$ s'identifie à  $\CF^K_{reg}(S_\infty)$ \cite[18.3]{Serre} et $\CF^K_{reg}(S_\infty) = K[ c_n |  p\nmid n]$.
\end{itemize}

Avant de donner les dernières notations, on rappelle quelques éléments de théorie des représentations modulaires. Soit $G$ un groupe et $(K,A,k)$ un système $p$-modulaire, avec $k=A/\mathfrak{m}$.

\begin{prop}\cite[14.4]{Serre}
 La réduction modulo $\mathfrak{m}$ induit un isomorphisme entre $K^A_0(G)$ et $ K^k_0(G)$.
\end{prop}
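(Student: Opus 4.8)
The plan is to realise the reduction map explicitly and then to check that it carries one natural $\mathbb{Z}$-basis bijectively onto another.

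First I would note that $k\otimes_A(-)$ sends a finitely generated projective $AG$-module to a finitely generated projective $kG$-module: one has $k\otimes_A AG\cong kG$, so a direct summand of $(AG)^n$ maps to a direct summand of $(kG)^n$. Since every short exact sequence of projective modules splits, this construction is additive, hence induces a group homomorphism $d\colon K_0^A(G)\to K_0^k(G)$ given on generators by $[P]\mapsto[P/\mathfrak{m}P]$. It is the bijectivity of this $d$ that must be proved.

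Next I would invoke Krull--Schmidt: both $K_0^A(G)$ and $K_0^k(G)$ are free abelian groups, a basis being the set of isomorphism classes of indecomposable projective modules, i.e.\ of the modules $AGe$ (resp.\ $kG\bar e$) for $e$ a primitive idempotent of $AG$ (resp.\ of $kG$), taken up to conjugacy. With respect to these bases the map $d$ sends $[AGe]$ to $[kG\bar e]$, so it is enough to show that $e\mapsto\bar e$ induces a bijection between the conjugacy classes of primitive idempotents of $AG$ and those of $kG$.

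The core of the argument is therefore the lifting of idempotents along the surjection $AG\twoheadrightarrow kG$. Its kernel $\mathfrak{m}AG$ is contained in the Jacobson radical of $AG$, and $AG$ is $\mathfrak{m}$-adically complete (this is where completeness of $A$ enters); the standard successive-approximation argument then shows that every idempotent of $kG$ lifts to an idempotent of $AG$, that two idempotents of $AG$ are conjugate if and only if their reductions are, and that an idempotent of $AG$ is primitive if and only if its reduction is. This provides the required bijection of bases, and hence $d$ is an isomorphism of abelian groups. I expect the idempotent-lifting step to be the only genuine obstacle, and the only place where the hypotheses on the $p$-modular system $(K,A,k)$ are really used; once primitive idempotents are matched up under reduction, the rest is bookkeeping.
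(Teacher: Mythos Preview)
Your outline is correct and is the classical argument. Note, however, that the paper does not supply a proof of its own here: the proposition is stated with a bare citation to \cite[14.4]{Serre} and the argument is deferred entirely to that reference. What you sketch---reduction mod~$\mathfrak{m}$ is well defined on projectives, Krull--Schmidt gives $\mathbb{Z}$-bases of indecomposable projectives on both sides, and idempotent lifting through $AG\twoheadrightarrow kG$ matches these bases bijectively---is exactly Serre's proof. One small caveat: you invoke $\mathfrak{m}$-adic completeness of $A$, whereas the paper's stated definition of ``système $p$-modulaire'' only says ``anneau de valuation'' without explicitly recording completeness; but Serre's Part~III works over a complete discrete valuation ring throughout, so the hypothesis is implicit in the citation and your appeal to it is legitimate.
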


\begin{definition}\label{defe}
En composant l'isomorphisme réciproque de la réduction mod. $\mathfrak{m}$ avec le morphisme induit par l'extension des scalaires de $A$ vers $K$, on définit un morphisme $e_G : K^k_0(G)\rightarrow R^K(G)$ 
\end{definition}

\begin{prop}\cite[16.1,2]{Serre}\label{inje}
Le morphisme $e_G$ est injectif.
\end{prop}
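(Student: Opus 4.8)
La preuve passera par le \emph{triangle de Cartan--Brauer}. On introduira d'abord le groupe de Grothendieck $R^k(G)$ de la catégorie de tous les $kG$-modules de type fini, muni de sa base des modules simples, puis deux morphismes à valeurs dans ce groupe. Le premier est le \emph{morphisme de Cartan} $c\colon K_0^k(G)\to R^k(G)$, qui oublie la projectivité ; dans les bases des indécomposables projectifs et des simples, sa matrice est la matrice de Cartan $\mathbf C$ de $kG$. Le second est le \emph{morphisme de décomposition} $d\colon R^K(G)\to R^k(G)$, qui à la classe d'un $KG$-module $V$ associe celle de $\widehat V/\mathfrak m\widehat V$, où $\widehat V$ est un $AG$-réseau de $V$ ; l'indépendance de cette classe vis-à-vis du réseau choisi est le théorème de Brauer--Nesbitt, qu'on rappellera en renvoyant à \cite{Serre}.

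L'étape charnière, mais de nature purement formelle, sera l'identité $c=d\circ e_G$. En effet, au vu de la définition \ref{defe}, si $P$ est un $kG$-module projectif de type fini et $\widehat P$ son relèvement projectif sur $A$, alors $\widehat P$ est un $AG$-réseau de $K\otimes_A\widehat P=e_G([P])$ et $\widehat P/\mathfrak m\widehat P\cong P$ ; donc $d\bigl(e_G([P])\bigr)=[P]=c([P])$. On aura alors $\ker e_G\subseteq\ker(d\circ e_G)=\ker c$, de sorte que tout se ramènera à l'injectivité de $c$.

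La seule difficulté non formelle résidera dans la non-dégénérescence de la matrice de Cartan — il suffira de la montrer sur $\mathbb Q$, $K_0^k(G)$ étant un groupe abélien libre de type fini. On l'obtiendra via la factorisation $\mathbf C={}^t\mathbf D\,\mathbf D$, où $\mathbf D$ est la matrice de décomposition de $d$, jointe au fait que $\mathbf D$ est de rang maximal ; ce dernier point découle de l'indépendance linéaire des caractères de Brauer des $kG$-modules simples (théorème de Brauer), qui fait de $\mathbf C$ une matrice de Gram définie positive. Tous ces ingrédients figurant dans \cite{Serre}, la seule vérification réellement propre à notre situation sera l'identité $c=d\circ e_G$. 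On pourrait d'ailleurs éviter d'introduire $R^k(G)$ en raisonnant sur les caractères : le caractère ordinaire de $e_G([P])$ s'annule sur les éléments $p$-singuliers et coïncide, sur les $p$-réguliers, avec le caractère de Brauer de $P$ ; l'annulation de $e_G(x)$ y équivaut à une relation $\mathbf C a=0$ sur les coordonnées de $x$, et l'on conclut de la même façon.
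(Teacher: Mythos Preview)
The paper gives no proof of this proposition: it merely cites \cite[16.1,2]{Serre}. Your sketch is correct and faithfully reproduces Serre's argument via the $cde$-triangle --- the commutativity $c=d\circ e_G$, Brauer reciprocity $\mathbf C={}^t\mathbf D\,\mathbf D$, and the maximal rank of $\mathbf D$ yielding $\det\mathbf C\neq 0$ and hence $\ker e_G\subseteq\ker c=0$. There is nothing to compare: you have simply supplied the proof the paper omits.

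One minor point of phrasing: your justification that $\mathbf D$ has maximal rank ``par l'ind\'ependance lin\'eaire des caract\`eres de Brauer'' is slightly elliptic. What is really used is that the restrictions of the ordinary irreducible characters to $p$-regular elements span the space of class functions there, together with the linear independence of the Brauer characters of the simples; combined, these force $d$ (hence $\mathbf D$) to be surjective. Since you explicitly defer to \cite{Serre} for these ingredients, this is harmless in a sketch, but it would be worth tightening if the proof were written out in full.
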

On peut caractériser l'image de $e_G$.
\begin{prop}\cite[16.2]{Serre}\label{0sing}
L'image du morphisme $K^k_0(G)\rightarrow R^K(G)$ est l'ensemble des éléments de $R(G)$ dont le caractère est nul sur les éléments d'ordre divisible par $p$.
\end{prop}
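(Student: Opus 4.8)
The plan is to prove the two inclusions separately. Write $e=e_G$; recall (Brauer) that the simple $kG$-modules $D_\nu$, hence their projective covers $P_\nu$, are indexed by a set whose cardinality $r$ is the number of $p$-regular conjugacy classes of $G$, and that $\im(e)$ and the right-hand side are \emph{a priori} two sublattices of $R^K(G)$ of that same rank $r$, so the real content is that they coincide.

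\textbf{$\im(e)\subseteq\{\chi:\chi(g)=0\text{ for every }p\text{-singular }g\}$.} Let $[P]\in K^k_0(G)$, lift $P$ to a projective $AG$-module $\widetilde P$ and set $\psi=e([P])$, the character of $K\otimes_A\widetilde P$. Fix $g\in G$ of order divisible by $p$ and write $g=g_pg_{p'}$ for its $p$- and $p'$-parts, so $\langle g\rangle\cong C_{p^a}\times C_m$ with $p^a=|g_p|>1$ and $p\nmid m$. Restriction preserves projectivity, so $\Res^G_{\langle g\rangle}\widetilde P$ is a projective $A\langle g\rangle$-module. Since $m$ is a unit in $A$, the algebra $A\langle g\rangle$ is a finite product of local rings $A'C_{p^a}$ with $A'$ a finite unramified extension of $A$, over which finitely generated projectives are free; hence $\Res^G_{\langle g\rangle}\widetilde P$ is a sum of copies of the regular modules $A'C_{p^a}$. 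On such a module $g_{p'}$ acts by a scalar and $g_p$ through the regular representation of $C_{p^a}$, so the trace of $g$ is a multiple of $\chi^{C_{p^a}}_{\mathrm{reg}}(g_p)=0$ (the regular character vanishes off the identity and $g_p\neq1$). Thus $\psi(g)=0$.

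\textbf{$\im(e)\supseteq\{\chi:\chi(g)=0\text{ for every }p\text{-singular }g\}$.} Let $d:R^K(G)\to R^k(G)$ be the decomposition homomorphism; on characters it is restriction to the $p$-regular elements (read as a Brauer character), and it is a surjective ring homomorphism (Brauer). Endow $R^K(G)$ with the usual inner product, for which the irreducible characters $\chi_\lambda$ are an orthonormal, hence unimodular, $\mathbb Z$-basis, and pair $K^k_0(G)$ with $R^k(G)$ by $\langle[P_\nu],[D_{\nu'}]\rangle:=\dim_k\Hom_{kG}(P_\nu,D_{\nu'})=\delta_{\nu\nu'}$, so that the projective indecomposables and the simple modules are dual bases and this pairing is unimodular as well. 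Brauer reciprocity gives $\langle e([P_\nu]),\chi_\lambda\rangle=d_{\lambda\nu}=\langle[P_\nu],d(\chi_\lambda)\rangle$, i.e. $e$ and $d$ are adjoint. Since $d$ is onto and both pairings are unimodular, $\im(e)=(\ker d)^{\perp}$. Now $\ker d$ is the set of characters vanishing on all $p$-regular elements, i.e., as class functions, those supported on the $p$-singular classes; since $\langle\chi,\ind_C\rangle=\tfrac{|C|}{|G|}\chi(g_C)$, its orthogonal complement $\im(e)$ is exactly $\{\chi\in R^K(G):\chi\text{ vanishes on all }p\text{-singular elements}\}$.

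The genuinely non-formal ingredients are the surjectivity of $d$ and the unimodularity (self-duality) statements embodied in Brauer reciprocity; granting those, everything reduces to the linear-algebra identity $\im(e)=(\ker d)^{\perp}$ for adjoint maps between unimodular lattices, together with the elementary computation of that orthogonal complement. I expect the integrality point—obtaining \emph{all} of the orthogonal complement, not merely a finite-index sublattice—to be the only real subtlety, and it is precisely what the unimodularity of the two pairings secures.
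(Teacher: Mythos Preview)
The paper does not prove this proposition at all; it merely cites it from Serre. So there is no ``paper's own proof'' to compare against, and your proposal stands as a self-contained argument for a result the paper imports as a black box.

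Your proof is correct. The first inclusion is the standard one (restrict a projective to $\langle g\rangle$, decompose $A\langle g\rangle$ as a product of local rings of the form $A'C_{p^a}$, and use that the regular character of $C_{p^a}$ vanishes off the identity). For the reverse inclusion your route is more structural than Serre's: rather than invoking Brauer induction directly, you use Brauer reciprocity to identify $e$ as the adjoint of $d$ between unimodular lattices, so that surjectivity of $d$ forces $\im(e)=(\ker d)^{\perp}$; this is a clean linear-algebra reduction, and the argument you sketch for it is valid. The deep input is the same in both approaches --- surjectivity of $d$ is itself proved (in Serre) via Brauer's theorem --- so neither argument is genuinely more elementary.

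One small remark on the final sentence: you invoke $\ind_C$ to compute $(\ker d)^{\perp}$, but $\ind_C$ does not lie in $R^K(G)$, only in $\CF^K(G)$. This is harmless, since after extending scalars $K\otimes\ker d$ is the full space of class functions supported on $p$-singular classes (a rank count using surjectivity of $d$), and orthogonality passes to the $K$-span. In fact you do not even need that direction: your first paragraph already gives $\im(e)\subseteq V$, and $(\ker d)^{\perp}\supseteq V$ is immediate from the inner-product formula, so $\im(e)=(\ker d)^{\perp}\supseteq V$ together with the first inclusion finishes the proof.
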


\begin{itemize}[label=--,leftmargin=*]
\item On note $\varepsilon_n=e_{S_n} : K^k_0(S_n) \rightarrow R^K(S_n)$ le morphisme de la définition \ref{defe} pour $G=S_n$. On note $\varepsilon : K^k_0(S_\infty) \rightarrow R^K(S_\infty)$ le morphisme induit par la famille $\{\varepsilon_n\}_{n\in\mathbb{N}}$.
\end{itemize}
\end{notation}

\begin{lemme}\label{ime}
L'application $\varepsilon$ induit un isomorphisme de $K_0^k(S_\infty)$ vers le sous-anneau de $R^K(S_\infty)$ formés des éléments dont le caractère est dans $\CF^K_{reg}(S_\infty)$.
\end{lemme}
\begin{proof}
D'après les propositions \ref{inje} et \ref{ime}, les morphismes $\varepsilon_n$ sont injectifs et ont pour images les classes de représentations dont le caractère est nul sur les éléments d'ordre divisible par $p$. Il reste donc à vérifier que $\varepsilon$ est multiplicatif. D'après la définition de $\varepsilon$ (voir la définition \ref{defe}), il suffit de remarquer l'induction commute avec l'extension des scalaires.
\end{proof}

Avant d'énoncer le théorème principal pour les groupes symétriques, on donne un résultat sur les séries formelles :

\begin{definition}
Soit $X(t)=\sum_{n \geq 0} x_n t^n$ une série formelle et $p$ un nombre premier.
\begin{itemize}[label=--]
\item On appelle partie \emph{$p$-régulière} de $X$ la série formelle : $\sum_{p\nmid n}  x_n t^n$ ;
\item On appelle partie \emph{$p$-singulière} de $X$ la série formelle : $\sum_{n \geq 0}  x_{np} t^{np}$.
\end{itemize}
\end{definition}

\begin{definition}
Soit $p$ un nombre premier.
On note $\sh_p$ la partie $p$-régulière de $\exp$ et $\ch_p$ sa partie $p$-singulière.
\end{definition}

\begin{lemme}\label{lemmetech}
	Soit $p$ un nombre premier et soit $X(t)= \sum_{n\geq 0} x_n t^n$ et \hbox{$C(t) = \sum_{n\geq 1} \frac{c_n}{n} t^n$} deux séries formelles telles que 
	$$ X(t) = \exp(C(t))$$
	On note $U(t)$ (resp. $V(t)$) la partie $p$-singulière (resp. $p$-régulière) de $X(t)$, $A(t)$ (resp. $B(t)$) la partie $p$-singulière (resp. $p$-régulière) de $C(t)$, et on note $Y(t) = \sum_{n\geq 0} y_n t^n$ la série formelle vérifiant :
	$$Y(t)= \frac{V(t)}{U(t)}$$
	Alors,
	\begin{enumerate}
		\item $y_n=0$ pour $p | n$ ;
		\item $y_n$ est un polynôme en les $c_i$, $p \nmid i$ ;
		\item $(y_n - x_n)$ est combinaison linéaire de $x_\lambda$ avec $\lambda \notin \mathcal{P}_{n,reg}$.		
	\end{enumerate}
\end{lemme}
\begin{proof}
	On a 
	\begin{eqnarray*}
		\exp(C(t)) & = & \exp(A(t)+B(t)) \\
		& = & \exp(A(t)).\ch_p(B(t))+\exp(A(t)).\sh_p(B(t))
	\end{eqnarray*}	
	En identifiant les parties singulières et régulières, on obtient :
	$$U(t) = \exp(A(t)).\ch_p(B(t))$$ 
	et 
	$$ V(t)=\exp(A(t)).\sh_p(B(t))$$
	Donc 
	$$Y(t) = \frac{\sh_p(B(t))}{\ch_p(B(t))}$$
	et les $y_n$, $n\geq 0$, vérifient les deux premiers points.	
	
	Par ailleurs, on a :
		
	\begin{eqnarray*}
		\frac{1}{U(t)} & = & \frac{1}{1 + \sum_{n\geq 1} x_{pn}t^{pn}} \\
		& = &\sum_{k\geq 0} (-1)^k (\sum_{n\geq 1} x_{pn}t^{pn})^k \\
		& = & \sum_{\substack{k\geq 0,\\ p \mid \lambda_1,\ldots, \lambda_k,\lambda_i>0 }}(-1)^{\lambda_1+\ldots \lambda_k}x_{\lambda_1,\ldots,\lambda_k} t^{\lambda_1 + \ldots +\lambda_k}
	\end{eqnarray*}
	Donc :
	\begin{eqnarray*}
		Y(t) & = & V(t)\big[\sum_{\substack{k\geq 0,\\p\mid \lambda_1,\ldots, \lambda_k,\lambda_i>0 }} (-1)^{\lambda_1+\ldots \lambda_k}x_{\lambda_1,\ldots,\lambda_k} t^{\lambda_1 + \ldots +\lambda_k}\big] \\
	\end{eqnarray*}
	Ce qui prouve le dernier point.
\end{proof}

\begin{remarque}\label{remyi}
La preuve fournit une formule explicite pour les $y_n$ : 
\begin{eqnarray*}y_n=\sum_{\substack{(\lambda_0,\ldots,\lambda_k)\in (\mathbb{N}^*)^{k+1} \\ \sum_i\lambda_i = n \\ p\nmid \lambda_0 \text{ et } p\mid \lambda_i, i\geq 1 }} (-1)^k x_{\lambda_0}\ldots x_{\lambda_k} \end{eqnarray*}
On a par exemple, pour $p=2$ :
\begin{itemize}[font=\footnotesize]
\item $y_1=x_1$;
\item $y_3 = x_3 - x_1x_2$ ;
\item $y_5 = x_5-x_3x_2+x_1x_2^2-x_1x_4 $ ;
\item $y_7 = x_7-x_5x_2 - x_3x_4 + x_3x_2^2 - x_1x_6+2x_1x_4x_2-x_1x_2^3$.
\end{itemize}
\end{remarque}

On peut à présent énoncer le résultat principal de cette section :

\begin{ThmPrinc}
 L'anneau $K_{0}^k(S_\infty)$ est un anneau de polynômes avec un générateur en chaque degré premier à $p$.
Précisément :
$$\varepsilon (K_0^k(S_\infty))=\mathbb{Z}[y_i | p\nmid i]$$ 
\end{ThmPrinc}
\begin{proof}
D'après le lemme \ref{ime}, il suffit de montrer que $\varepsilon(K_{0}^k(S_\infty))=\mathbb{Z}[y_i | p\nmid i]$. En reprenant les notations de \ref{cpxsym}, on a l'égalité dans $\CF^K(S_\infty)$ :
$$ x_n = \sum_{(1^{n_1},\ldots,k^{n_k})\in\mathcal{P}_n} \bigg(\prod_{i=1}^k \frac{ c_i^{n_i}}{ n_i!i^{n_i}}\bigg)$$
Ainsi en calculant dans $\CF^K(S_\infty)[[t]]$, on obtient :
$$ 
\sum_{n\geq 0} x_n t^n= \exp\big(\sum_{i\geq 1} \frac{c_i}{i} t^i\big)
$$
Donc d'après le lemme \ref{lemmetech} et en reprenant les notations, on a pour tout $n$ premier à $p$, $y_n$ dans $\varepsilon(K_{0}^k(S_\infty))$ et donc $\mathbb{Z}[y_n | p\nmid n]$ est un sous-anneau de $\varepsilon(K_{0}^k(S_\infty))$. De plus, en identifiant les représentations avec leur caractère et en comparant les dimensions en chaque degré, on a également :
\begin{equation}\label{polyK}
K[y_n | p\nmid n] = \CF^K_{reg}(S_\infty)
\end{equation} 

Soit maintenant $z\in \varepsilon(K_{0}^k(S_\infty))$ homogène de degré $n$. En plongeant $z$ dans $K\otimes \varepsilon(K_{0}^k(S_\infty))$ par le morphisme canonique et d'après (\ref{polyK}) on obtient :
$$ z = \sum_{\lambda\in\mathcal{P}_{n,reg}}\alpha_\lambda y_\lambda $$
avec $\alpha_\lambda\in K$.
De plus, $z$ est également un élément de $R^K(S_\infty)$, donc 
$$ z = \sum_{\lambda\in\mathcal{P}_n}\beta_\lambda x_\lambda $$
avec $ \beta_\lambda \in \mathbb{Z}$.
On a donc :
$$
0 =  \sum_{\lambda\in\mathcal{P}_n}\beta_\lambda x_\lambda  - \sum_{\lambda\in\mathcal{P}_{n,reg}}\alpha_\lambda y_\lambda
$$
D'après la définition des $y_n$,
$$y_\lambda = x_\lambda + R_\lambda$$
avec $R_\lambda$ combinaison linéaire des $\{x_{\lambda'}\}_{\lambda'\notin \mathcal{P}_{n,reg}}$.
Donc,
$$
0  =  \sum_{\lambda\in\mathcal{P}_{n,reg}} (\beta_\lambda - \alpha_\lambda)x_\lambda + \sum_{\lambda\notin \mathcal{P}_{n,reg}}\beta_\lambda x_\lambda - \sum_{\lambda\in \mathcal{P}_{n,reg}}\alpha_\lambda R_\lambda
$$
Enfin, la famille $\{x_\lambda\}_{\lambda\in\mathcal{P}_n}$ est une base de $R^K(S_\infty)_n=R^K(S_n)$. On en déduit que 
$$\sum_{\lambda\in\mathcal{P}_{n,reg}} (\beta_\lambda - \alpha_\lambda)x_\lambda  = 0$$
puis que pour tout $\lambda\in\mathcal{P}_{n,reg}$, $$\beta_\lambda - \alpha_\lambda = 0$$

Finalement $z\in\mathbb{Z}[y_n | p\nmid n]$ et le résultat est prouvé.
\end{proof}

\begin{exemple}
Comparons les premiers générateurs du théorème et les $\mathbb{F}_2S_n$-modules projectifs indécomposables  : 
\begin{itemize}[font=\footnotesize]
\item $y_1=x_1$ est la représentation triviale du groupe trivial. Notons au passage que pour tout $n\geq 1$, $x_1^n$ est la classe de la représentation régulière ;
\item Il y a une unique représentation simple de $S_2$ sur $\mathbb{F}_2$ : la représentation triviale. Donc un unique  $\mathbb{F}_2S_2$ module projectif indécomposable : la couverture projective de la triviale qui est $\mathbb{F}_2S_2$ dont la série de composition est deux fois la triviale et $2x_2 = y_1^2$ ;
\item Pour $n=3$, on a la décomposition $\mathbb{F}_2S_3 = P_{\mathbb{F}_2} \oplus \St_2^{\oplus 2}$ où $P_{\mathbb{F}_2}$ est la couverture projective de la représentation triviale de $S_3$ sur $\mathbb{F}_2$ et $\St_2$ est la seconde représentation simple de $S_3$. D'après ce qui précède, on a $x_1^3 = 2x_1x_2$, donc $x_1^3 = 2x_3 + 2(x_1x_2-x_3)$. En particulier, $(-y_3)= x_1x_2-x_3$ est la classe de $\St_2$ ;
\item Il y a deux représentations simples de $S_4$ sur $\mathbb{F}_2$ \cite[p.267]{Webb}. Notons $P_1$ la couverture projective de la triviale et $P_2$ la couverture projective du second simple. On vérifie que $\mathbb{F}_2S_4= P_1\oplus P_2^{\oplus 2}$. Le calcul (en utilisant par exemple \cite{reps}) permet d'identifier $(-y_1y_3)$ à la classe de $P_2$. Il s'en suit que la couverture projective de la triviale appartient à la classe $y_1^4 +2y_1y_3$.
\end{itemize}
Notons que la remarque \ref{remyi} donne une formule explicite des $4$ premiers générateurs en fonction des classes des représentations triviales.
\end{exemple}

\section{Anneau des représentations modulaires projectives des produits en couronne}

Le cas des produits en couronne avec les groupes symétriques demande un peu plus de travail mais la preuve du théorème \ref{thm2} est très similaire à la précédente.

\begin{notation}
	\item
	\begin{itemize}[label=--,leftmargin=*]
	 	\item $p$ est un nombre premier fixé.
		\item $G$ est un groupe fini.
		\item $G_{reg}$ désigne l'ensemble des éléments de $G$ d'ordre premier à $p$.
		\item $\Cl_{reg}(G)$ l'ensemble des classes de conjugaison $p$-régulières de $G$, i.e. des classes d'éléments de $G_{reg}$.
		\end{itemize}		
\begin{remarque}
Les classes de conjugaison $p$-régulières de $G\wr S_n$ sont paramétrées par l'ensemble 
$$\{\varphi : \Cl_{reg}(G)\rightarrow \mathcal{P}_{reg} | \sum_{C\in\Cl_{reg}(G)} |\varphi(\rho)|=n\} $$
\end{remarque}
\begin{itemize}[label=--,leftmargin=*]
		\item $(K,A,k)$ un système $p$-modulaire de rupture pour $G$. D'après \cite[4.4.8]{JamesKerber} c'est également un système $p$-modulaire de rupture pour les $G\wr S_n$.
		\item On note $K_0^k(G\wr S_n)$ le groupe de Grothendieck de la catégorie des $kG\wr S_n$-modules projectifs de type fini et $K_0^k(G\wr S_\infty)$ l'algèbre graduée qui se déduit des $K_0^k(G\wr S_n)$ pour $n$ dans $\mathbb{N}$.
		\item On note $R^K(G)$ le groupe de Grothendieck  de la catégorie des représentations de $G$ sur $K$ et $K^k_0(G)$ le groupe de Grothendieck de la catégorie des $kG$-modules projectifs de type fini.
		\item On note plus simplement $e = e_G : K^k_0(G) \rightarrow R^K(G)$ le morphisme de la définition \ref{defe}. 
		\item On désigne par $\varepsilon : R^K(G\wr S_\infty) \rightarrow K_0^k(G\wr S_\infty)$ le morphisme d'anneaux gradués induit par les $(e_{G\wr S_n})_{n\in\mathbb{N}}$ (voir à nouveau la définition \ref{defe}).
		\item On rappelle que $N=\dim_K \CF^K(G)$.
		\item On note $\CF^K_{reg}(G)$ le sous-espace de $\CF^K(G)$ formé des fonctions qui s'annulent en dehors de $G_{reg}$. D'après le lemme \ref{ime}, ce sous-espace s'identifie à $K\otimes e(K_0^k(G))$. On note $M= \dim_K \CF_{reg}^K(G)$.
		\item On note $\chi_1,\ldots, \chi_N$ les caractères irréductibles de $G$ sur $K$. Ils forment une base de $\CF^K(G)$.
		\item On note $\varphi_1,\ldots,\varphi_M$ l'image par $K\otimes e$ des caractères de Brauer des $kG$-modules projectifs indécomposables. Ils forment une base de $\CF_{reg}(G)$.
		\item On note $E\in \mathcal{M}_{N,M}(\mathbb{Z})$ la matrice de l'application $e : K_0^k(G) \rightarrow R^K(G)$ dans les bases données ci-dessus.
	\end{itemize}
\end{notation}

\begin{lemme}
	Le morphisme $e$ admet une rétraction, i.e il existe une application $\mathbb{Z}$-linéaire $\sigma :R^K(G) \rightarrow K_0^k(G)$ telle $\theta\circ e = \id_{K_0^k(G)}$. 
\end{lemme}
\begin{proof}
	D'après \cite{Serre}, $E=D^t$, où $D$ est la matrice de l'application $\mathbb{Z}$-linéaire $d$. De plus, $d$ admet une section $\sigma$ \cite[18.4]{Serre}. On en déduit que $e$ admet une rétraction.
\end{proof}

\begin{itemize}[label=--,leftmargin=*]
\item Puisque $e$ admet une rétraction, on peut compléter $\{\varphi_1,\ldots,\varphi_M\}$ en une base $\{\varphi_1,\ldots,\varphi_N\}$ de $R^K(G)$. On note pour tout $i\in [\![1,N]\!]$ :
$$ \varphi_i =
\begin{pmatrix}
\varphi_{1,i} \\
\vdots \\
\varphi_{N,i}
\end{pmatrix}$$
\end{itemize}

\begin{definition}
	Pour tout $k\in \{ 1,\ldots,M\}$,
	$$ X_k(t) = \sum_{i\geq 0} X_{k,i} t^i = \big( \sum_{i\geq 0} \Phi_1(x_i)t^i\big)^{\varphi_{1,k}} \ldots \big( \sum_{i\geq 0} \Phi_M(x_i)t^i\big)^{\varphi_{M,k}} $$
	et pour tout $k\in \{ M+1,\ldots,N\}$,
	$$ X_k(t)= \sum_{i\geq 0} X_{k,i} t^i =  \sum_{i\geq 0}\sum_{j=1}^N \varphi_{j,k}\Phi_j(x_i) t^i $$
	où les $\Phi_1,\ldots, \Phi_N$ sont les applications de la définition \ref{defwrp}.
\end{definition}

\begin{lemme}\label{lt2}
	Pour tout $k\in \{ 1,\ldots,M\}$, on a l'égalité suivante dans $\CF^K(G\wr S_\infty)$ :
	$$ X_k(t) = \exp(C_k(t)) $$
	avec $$C_k(t) = \sum_{i=1}\big[\sum_{C\in \Cl_{\text{reg}}(G)} \big( \sum_{j=1}^N \frac{|C|}{|G|} \varphi_{j,k} \chi_j(C)\big) \frac{\xi_{i,C}}{i} t^i \big]$$
\end{lemme}
\begin{proof}
	\begin{eqnarray*}
		X_k(t)  & = & \big( \sum_{i\geq 0} \Phi_1(x_i)t^i\big)^{\varphi_{1,k}} \ldots \big( \sum_{i\geq 0} \Phi_M(x_i)t^i\big)^{\varphi_{M,k}}\\
		& = & \exp\big( \sum_{i\geq 1} \Phi_1(\frac{c_i}{i})t^i\big)^{\varphi_{1,k}}\ldots \exp\big( \sum_{i\geq 1} \Phi_M(\frac{c_i}{i})t^i\big)^{\varphi_{M,k}}\\
		& = & \exp\big(\sum_{i\geq 1} ( \sum_{j=1}^N \varphi_{j,k} \Phi_j(\frac{c_i}{i} t^i)\big) \\
		& = & \exp\big(\sum_{i\geq 1} \big[ \sum_{j=1}^N \big(\sum_{C\in \Cl(G)}\varphi_{j,k}\chi_k(C)\frac{|C|}{|G|}\frac{\xi_{i,C}}{i} t^i\big)\big]\big)\big)
	\end{eqnarray*}
	car $\Phi_j(c_i) = \sum_{C\in\Cl(G)} \chi_j(C)\frac{|C|}{|G|} \xi_{i,C}$
	\begin{eqnarray*}
		&= & \exp\big[\sum_{C\in\Cl(G)}\big(\sum_{j=1}^N \varphi_{j,k} \xi_j(C) \big) \frac{\xi_{i,C}}{i} t^i \big]\\
		&= & \exp\big[\sum_{C\in\Cl_{\text{reg}}(G)}\big(\sum_{j=1}^N \varphi_{j,k} \xi_j(C) \big) \frac{\xi_{i,C}}{i} t^i \big]
	\end{eqnarray*}
	D'après le lemme \ref{0sing}.
\end{proof}

\begin{cor}\label{cor}
	Pour tout $k\in\{1,\ldots,M\}$, on note :
	$$Y_k(t)= \sum_{n\geq 0} y_{k,n} t^n$$
	défini comme dans le lemme \ref{lemmetech}. On observe alors que pour tout $n\geq 1$, $y_{k,n}$ est combinaison linéaire de $\xi_C$ avec $C\in\Cl_{\text{reg}}(G\wr S_n)$. En particulier, $y_{k,n}\in  K\otimes \varepsilon(K_0^k(G\wr S_\infty))$, pour tout $n\geq 1$ et $k\in[\![1,M]\!]$.
\end{cor}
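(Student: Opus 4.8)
The plan is to read the corollary off directly from Lemmas \ref{lemmetech} and \ref{lt2}, so most of the work is already done. By Lemma \ref{lt2} we have $X_k(t)=\exp(C_k(t))$, and the coefficient of $t^i$ in $C_k(t)$ — the quantity playing the role of $c_i/i$ in Lemma \ref{lemmetech} — equals $\frac1i\sum_{C\in\Cl_{\text{reg}}(G)}\big(\sum_{j=1}^N\frac{|C|}{|G|}\varphi_{j,k}\chi_j(C)\big)\xi_{i,C}$, a $K$-linear combination of the $\xi_{i,C}$ with $C$ running over $\Cl_{\text{reg}}(G)$. Applying points $(1)$ and $(2)$ of Lemma \ref{lemmetech} to $X_k(t)=\exp(C_k(t))$ then gives at once that $y_{k,n}=0$ whenever $p\mid n$, and that every $y_{k,n}$ is a polynomial with coefficients in $K$ in the coefficients of $C_k(t)$ of degree prime to $p$; by the previous sentence it is therefore a polynomial in the finite family $\{\xi_{i,C}\mid p\nmid i,\ C\in\Cl_{\text{reg}}(G)\}$.

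The remaining step is purely a matter of repackaging. Since $y_{k,n}$ is homogeneous of degree $n$ for the grading of $\CF^K(G\wr S_\infty)$ in which $\xi_{i,C}$ has degree $i$, it is a $K$-linear combination of monomials $\prod_s\xi_{i_s,C_s}$ with $\sum_s i_s=n$, each $i_s$ prime to $p$ and each $C_s\in\Cl_{\text{reg}}(G)$. Grouping the factors of such a monomial according to the class $C\in\Cl_{\text{reg}}(G)$ and collecting the corresponding multiplicities into a partition exhibits it as one of the basis elements $\xi_\varphi$ attached to a map $\varphi:\Cl_{\text{reg}}(G)\to\mathcal{P}_{reg}$ with $\sum_C|\varphi(C)|=n$, i.e. (by the remark describing the $p$-regular classes of $G\wr S_n$) to a class $C\in\Cl_{\text{reg}}(G\wr S_n)$. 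Hence $y_{k,n}$ is a $K$-linear combination of the $\xi_C$, $C\in\Cl_{\text{reg}}(G\wr S_n)$, which is the first assertion.

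For the ``in particular'', note that each $\xi_C$ with $C\in\Cl_{\text{reg}}(G\wr S_n)$ is a scalar multiple of the characteristic function of a single $p$-regular conjugacy class, hence vanishes on every element of $G\wr S_n$ of order divisible by $p$; thus $\xi_C\in\CF^K_{reg}(G\wr S_n)$, and so does any $K$-linear combination of such, in particular $y_{k,n}$. Finally, $(K,A,k)$ being a $p$-modular splitting system for $G\wr S_n$, the identification of $\CF^K_{reg}(\,\cdot\,)$ with $K\otimes\varepsilon(K_0^k(\,\cdot\,))$ used for $G$ (via Lemma \ref{ime}, whose proof — injectivity of $e$, description of its image, multiplicativity through the commutation of induction with extension of scalars — applies verbatim) gives $\CF^K_{reg}(G\wr S_\infty)=K\otimes\varepsilon(K_0^k(G\wr S_\infty))$ in each degree; therefore $y_{k,n}\in K\otimes\varepsilon(K_0^k(G\wr S_\infty))$.

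The only point requiring any care — the rest being a transcription of Lemmas \ref{lemmetech} and \ref{lt2} — is the bookkeeping of the second paragraph: one must check that the monomials in the ``$p$-regular primitives'' $\xi_{i,C}$ ($p\nmid i$, $C\in\Cl_{\text{reg}}(G)$) correspond, up to nonzero scalars, to the basis $\{\xi_C\}_{C\in\Cl_{\text{reg}}(G\wr S_n)}$, equivalently that the parametrisation of the $p$-regular classes of $G\wr S_n$ by maps $\Cl_{\text{reg}}(G)\to\mathcal{P}_{reg}$ is compatible with the multiplicative structure of $\CF^K(G\wr S_\infty)$. This is the wreath-product counterpart of the identity $\CF^K_{reg}(S_\infty)=K[c_n\mid p\nmid n]$ used in the previous section and is verified in the same way, the induction of the characteristic function of a single conjugacy class of a subgroup being supported on a single conjugacy class of the ambient group.
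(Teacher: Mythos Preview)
Your proof is correct and follows essentially the same route as the paper's own argument: invoke Lemma~\ref{lt2} to see that the coefficients of $C_k(t)$ lie in the span of the $\xi_{i,C}$ with $C\in\Cl_{\text{reg}}(G)$, apply Lemma~\ref{lemmetech} to conclude that each $y_{k,n}$ is a polynomial in those $\xi_{i,C}$ with $p\nmid i$, and then use the identification of $K\otimes\varepsilon(K_0^k(G\wr S_\infty))$ with the class functions vanishing on $p$-singular classes. The paper's proof is extremely terse (essentially one sentence for each of the two assertions), whereas you spell out the bookkeeping---in particular the passage from monomials in the $\xi_{i,C}$ to linear combinations of the $\xi_C$, $C\in\Cl_{\text{reg}}(G\wr S_n)$---which the paper leaves implicit; but there is no substantive difference in strategy.
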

\begin{proof}
	 En utilisant les lemmes \ref{lemmetech} et \ref{lt2}, on vérifie que pour tout $n\geq 1$ et $k$ dans $[\![1,M]\!]$, $y_{i,n}$ est combinaison linéaire à coefficients dans $K$ de $\xi_C$ avec \hbox{$C\in \Cl_{\text{reg}}(G\wr S_n)$.}
		 Par ailleurs, l'image de $K\otimes \varepsilon$ dans $\CF^K(G\wr S_\infty)$ est exactement l'ensemble des fonctions de classe qui s'annulent sur les classes $p$-singulières. D'après la première partie de la preuve, c'est le cas des $y_{i,n}$, pour tout $n\geq 1$ et $k\in[\![1,M]\!]$.
\end{proof}

\begin{lemme}\label{chtgen}
	L'ensemble $\{X_{k,i} | k\in[\![1;N]\!], i\geq 1\}$ est un ensemble de générateurs de l'anneau gradué $R^K(G\wr S_\infty)$ :
	$$\mathbb{Z}[X_{k,i} | k\in [\![1;N]\!], i\geq 1]  = \mathbb{Z}[\Phi_k(x_i) | k\in [\![1;N]\!], i\geq 1]$$
\end{lemme}
\begin{proof}
	On le montre par récurrence sur $n\in \mathbb{N}^*$ que $\mathbb{Z}[X_{k,i} | k\in [\![1;N]\!], 1\leq i \leq n]  = \mathbb{Z}[\Phi_k(x_i) | k\in [\![1;N]\!], 1\leq i \leq n]$.
	Pour $i=1$, on a :
	$$X_{1,1}=\varphi_{1,1}\Phi_1(x_1) + \ldots + \varphi_{N,1}\Phi_N(x_1)$$
	$$\vdots $$
	$$X_{N,1}=\varphi_{1,N}\Phi_1(x_1) + \ldots + \varphi_{N,N}\Phi_N(x_1)$$
	Mais $(\varphi_{i,j})_{1\leq i,j \leq N}$ est une matrice inversible dans $\mathbb{Z}$, donc 
	$$\mathbb{Z}[X_{k,1} | k\in [\![1;N]\!]]  = \mathbb{Z}[\Phi_k(x_1) | k\in [\![1;N]\!]]$$
	Supposons maintenant que $$\mathbb{Z}[X_{k,i} | k\in [\![1;N]\!], 1\leq i \leq n-1]  = \mathbb{Z}[\Phi_k(x_i) | k\in [\![1;N]\!], 1\leq i \leq n-1]$$
	On a 
	$$X_{1,n}=\varphi_{1,1}\Phi_1(x_n) + \ldots + \varphi_{N,1}\Phi_N(x_n) + R_1$$
	$$\vdots $$
	$$X_{N,n}=\varphi_{1,N}\Phi_1(x_n) + \ldots + \varphi_{N,N}\Phi_N(x_n) + R_N$$
	avec $R_k \in \mathbb{Z}[X_{k,i}| k\in [\![1;N]\!], 1\leq i \leq n-1]$ (on a même $R_k=0$ pour $M+1\leq k \leq N$). Ainsi les
	$$\varphi_{1,1}\Phi_1(x_n) + \ldots + \varphi_{N,1}\Phi_N(x_n)$$
	$$\vdots $$
	$$\varphi_{1,N}\Phi_1(x_n) + \ldots + \varphi_{N,N}\Phi_N(x_n)$$
	sont dans $\mathbb{Z}[X_{k,i}| k\in [\![1;N]\!], 1\leq i \leq n]$. Comme $(\varphi_{i,j})_{1\leq i,j \leq N} \in GL_N(\mathbb{Z})$, on en déduit que $\{\Phi_k(x_n)\}_{1\leq k\leq N}$ sont dans $\mathbb{Z}[X_{k,i}| k\in [\![1;N]\!], 1\leq i \leq n]$ et le résultat est prouvé.
\end{proof}

On peut maintenant donner le théorème principal de cette section :

\begin{ThmPrinc}L'anneau $K_0^k(G\wr S_\infty)$ est un anneau de polynômes. De plus on a :
	$$ \varepsilon(K_0^k(G\wr S_\infty)) = \mathbb{Z}[y_{i,n} | i\in [\![1;M]\!], p\nmid n] $$
\end{ThmPrinc}
\begin{proof}
	D'après le corollaire \ref{cor},
	$$ \mathbb{Z}[y_{i,n} | i\in [\![1;M]\!], 2\nmid n] \subset \varepsilon(K_0^k(G\wr S_\infty))$$
	De plus, en comparant les dimensions degré par degré, on a :
	\begin{equation}\label{beta}
	K\otimes \varepsilon(K_0^k(G\wr S_\infty)) = K[y_{i,n} | i\in [\![1;M]\!], p\nmid n]
	\end{equation}
	Il reste à montrer que pour $z$ homogène de degré $n$ dans $\varepsilon(K_0^k(G\wr S_\infty))$, $z$ est un polynôme à coefficients dans $\mathbb{Z}$ en les $y_{i,k}$, $i\in[\![1,M]\!]$ et $k\leq n$.
	
	 Considérons donc un tel $z$. D'après (\ref{beta}) :
	$$ z = \sum_{\lambda_1,\ldots,\lambda_M \in\mathcal{P}_{\text{reg}}, \sum |\lambda_i| = n}\alpha_{\lambda_1,\ldots,\lambda_M} y_{1,\lambda_1}\ldots y_{M,\lambda_M}$$
	avec $\alpha_{\lambda_1,\ldots,\lambda_M} \in K$, et
	$$ z = \sum_{\lambda_1,\ldots,\lambda_N \in\mathcal{P}, \sum |\lambda_i| = n}\beta_{\lambda_1,\ldots,\lambda_N} X_{1,\lambda_1}\ldots X_{N,\lambda_N}$$
	avec $\beta_{\lambda_1,\ldots,\lambda_N} \in \mathbb{Z}$, d'après le lemme \ref{chtgen}.
	Or $y_{i,\lambda}=X_{i,\lambda} + R_{i,\lambda}$
	où $R_{i,\lambda}$ est combinaison linéaire de $X_{i,\lambda'}$ avec $\lambda' \notin \mathcal{P}_{\text{reg}}$ et la famille $\{X_{1,\lambda_1}\ldots X_{N,\lambda_N}\}_{\lambda_i\in\mathcal{P}, \sum |\lambda_i| =n }$ forme une base de $R^K(G\wr S_\infty)_n$.
	 Donc
	$$y_{1,\lambda_1}\ldots y_{M,\lambda_M}=X_{1,\lambda_1}\ldots X_{M,\lambda_M} + R_{\lambda_1,\ldots,\lambda_M}$$
	où $R_{\lambda_1,\ldots,\lambda_M}$ est un polynôme en les $X_{i,\lambda'}$ avec $i\in[\![1,M]\!]$ et  $\lambda'\notin\mathcal{P}_{\text{reg}}$.
 On a alors :
	\begin{eqnarray*}
		0 & = & \sum_{\substack{\lambda_1,\ldots,\lambda_M \in\mathcal{P}_{\text{reg}},\\ \sum |\lambda_i|=n}} (\alpha_{\lambda_1,\ldots,\lambda_M} - \beta_{\lambda_1,\ldots,\lambda_M})X_{1,\lambda_1}\ldots X_{M,\lambda_M} \\
		&  &+ \sum_{\substack{\lambda_1,\ldots,\lambda_M \in\mathcal{P}_{\text{reg}},\\ \sum |\lambda_i|=n}} \alpha_{\lambda_1,\ldots,\lambda_M}R_{\lambda_1,\ldots,\lambda_M} \\
		&  & -\sum_{\substack{\lambda_1,\ldots,\lambda_M\notin\mathcal{P}_{\text{reg}},\\ \sum |\lambda_i|=n}} \beta_{\lambda_1,\ldots,\lambda_M}X_{1,\lambda_1}\ldots X_{N,\lambda_M} - \sum_{\substack{(\lambda_{M+1},\ldots,\lambda_N)\neq 0,\\ \sum |\lambda_i|=n}} \beta_{\lambda_{1},\ldots,\lambda_N}X_{1,\lambda_1}\ldots X_{N,\lambda_N}
	\end{eqnarray*}
	Par indépendance linéaire des $\{X_{1,\lambda_1}\ldots X_{N,\lambda_N}\}_{\lambda_i\in\mathcal{P}, \sum |\lambda_i| =n }$, on en déduit que pour tout $\lambda_1,\ldots,\lambda_M\in\mathcal{P}_{\text{reg}}$, $\alpha_{\lambda_1,\ldots,\lambda_M}=\beta_{\lambda_1,\ldots,\lambda_M}$ donc $\alpha_{\lambda_1,\ldots,\lambda_M}\in\mathbb{Z}$ et le résultat est prouvé.
\end{proof}
\bibliographystyle{alpha}
\nocite{Zel}
\nocite{James}
\nocite{Geis}
\nocite{GeisKinch}
\nocite{JamesKerber}
\nocite{Serre}
\nocite{Webb}
\nocite{Cartier}
\nocite{CurtisReiner1}
\nocite{Vogel}
\nocite{GinbergReiner}
\bibliography{biblipolyrep}
\end{document}